\numberwithin{equation}{section}
\theoremstyle{plain}
\newtheorem{theorem}{Theorem}
 \newtheorem{satz}{Theorem}
\newtheorem{lemma}[theorem]{Lemma}
\newtheorem{corollary}[theorem]{Corollary}
\newtheorem{example}{Example}
\newtheorem{proposition}[theorem]{Proposition}
\newtheorem{remark}[theorem]{Remark}
\theoremstyle{definition}
\newtheorem{definition}[theorem]{Definition}
\newcommand\C{\mathbb C}
\newcommand\R{\mathbb R}
\newcommand\N{\mathbb N}
\newcommand\Ha{\mathbb H} % upper half-plane
\renewcommand\Im{\operatorname{Im}} % imaginary part
\newcommand{\eps}{\varepsilon}
\newcommand{\Lip}{\operatorname{Lip}\left( \frac{1}{2}\right)}
\newcommand{\LandauO}{\mathcal{O}} 
\newcommand{\diam}{\operatorname{diam}}
\newcommand{\dist}{\operatorname{dist}}
\newcommand{\hcap}{\operatorname{hcap}}
\begin{document}

\parindent 0pt 
\setcounter{section}{0}

\author{Sebastian Schlei\ss inger}
\title[On driving functions for quasislits]{On driving functions generating quasislits in the chordal Loewner-Kufarev equation}
\date{\today}

\begin{abstract}
     We prove that for every $C>0$ there exists a driving function $U:[0,1]\to\R$ such that the corresponding chordal Loewner-Kufarev equation generates a quasislit and $ \limsup_{h\downarrow0}\frac{|U(1)-U(1-h)|}{\sqrt{h}}=C. $ 
\end{abstract}
  	\keywords{Loewner-Kufarev equation, chordal differential equation, quasislits, slit domains, driving function}
\subjclass[2010]{30C20, 30C35} 
 \email{sebastian.schleissinger@mathematik.uni-wuerzburg.de}
% \address{}
\maketitle

\section{Introduction and result}

Denote by $\Ha:=\{z\in\C\;|\;\Im(z)>0\}$ the upper half-plane. A bounded subset $A\subset\Ha$ is called a \emph{(compact) hull} if $A=\Ha\cap \overline{A}$ and $\Ha\setminus A$ is simply connected. By $g_A$ we denote the unique conformal
 mapping from $\Ha\setminus A$ onto $\Ha$ with \emph{hydrodynamic
   normalization}, i.e. $$g_A(z)=z+\frac{b}{z}+\LandauO(|z|^{-2}) \quad
 \text{for} \quad |z|\to\infty$$ and for some $b\geq0$. The quantity $\hcap(A):=b$ is called
 \emph{half-plane capacity} of $A.$\\

 The chordal (one-slit) Loewner-Kufarev equation for $\Ha$ is given by

\begin{equation}\label{ivp}
   \dot{g}_t(z)=\frac{2}{g_t(z)-U(t)}, \quad g_0(z)=z\in\Ha,
\end{equation}

where $U:[0,T]\to\R$ is a continuous function, the so called \emph{driving function}. For $z\in\Ha,$ let $T_z$ be the supremum of all $t$ such that the solution exists up to time $t$ and $g_t(z)\in \Ha.$ Let $K_t:=\{z\in \Ha \; | \;  T_z\leq t\},$ then  $\{K_t\}_{t\in[0,T]}$ is a family of increasing hulls and $g_t$ is the unique conformal mapping of $\Ha\setminus K_t$ onto $\Ha$ with hydrodynamic normalization and $g_t(z)=z+\frac{2t}{z}+\LandauO(|z|^{-2})$ for $z\to\infty.$\\

If $\gamma:[0,T]\to \overline{\Ha}$ is a simple curve, i.e. a continuous, one-to-one function with $\gamma(0)\in\R$ and $\gamma((0,1])\subset \Ha,$ then we call the hull $\Gamma:=\gamma((0,1])$ a \textit{slit}. The important connection between slits and equation (\ref{ivp}) is given by the following Theorem.

\begin{satz}[Kufarev, Sobolev, Spory{\v{s}}eva]\label{slitex}
 For any slit $\Gamma$ with $\hcap(\Gamma)=2T$ there exists a unique continuous driving function $U:[0,T]\to\R$ such that the solution $g_t$ of (\ref{ivp}) satisfies $g_T=g_\Gamma.$
\end{satz}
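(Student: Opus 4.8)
The plan is to recover the driving function as the image of the slit's tip under the uniformizing map, after first reparametrizing $\gamma$ by half-plane capacity. For $s\in[0,T]$ write $\Gamma_s:=\gamma((0,s])$ and set $a(s):=\hcap(\Gamma_s)$. The first step is to show that $a$ is a continuous, strictly increasing bijection of $[0,T]$ onto $[0,2T]$. Monotonicity follows from the monotonicity of half-plane capacity under inclusion of hulls; strict monotonicity from the fact that a nonempty hull has positive capacity together with the composition rule $\hcap(\Gamma_{s'})=\hcap(\Gamma_s)+\hcap\big(g_{\Gamma_s}(\Gamma_{s'}\setminus\Gamma_s)\big)$ for $s<s'$; and continuity from the fact that $\hcap$ is controlled by the diameter of the hull, which tends to $0$ with $s'-s$ by uniform continuity of $\gamma$. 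Since $a(T)=\hcap(\Gamma)=2T$, I may then replace $\gamma$ by $t\mapsto\gamma(a^{-1}(2t))$ and assume henceforth that $\hcap(\Gamma_t)=2t$.

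Writing $g_t:=g_{\Gamma_t}$, the second step is to define $U(t):=g_t(\gamma(t))$, where $g_t$ is understood through its continuous extension to the tip of $\Gamma_t$. This extension exists and sends the tip to a single real point because the slit is a simple arc, so the boundary is locally connected near the tip and Carath\'eodory's theorem applies. The key analytic point is that $U$ is continuous. For $s<t$ the composition $\phi_{s,t}:=g_t\circ g_s^{-1}$ is the hydrodynamically normalized map of $\Ha\setminus A_{s,t}$, where $A_{s,t}:=g_s(\gamma((s,t]))$ is a hull of capacity $2(t-s)$ by the composition rule. Continuity of $\gamma$ and of the boundary extension of $g_s$ at the tip force $\diam(A_{s,t})\to0$ as $t\downarrow s$, so $A_{s,t}$ collapses to the point $U(s)$; since the normalized map of a hull contained in a ball $B(x_0,r)$ displaces points by $\LandauO(r)$ and sends the tip to a real point within $\LandauO(r)$ of $x_0$, I obtain $|U(t)-U(s)|=\LandauO(\diam(A_{s,t}))\to0$, and symmetrically from the left.

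The third step is to verify \eqref{ivp}. Fix $z$ and put $w:=g_t(z)$. From $g_{t+h}=g_{A_{t,t+h}}\circ g_t$ and the expansion
$$g_{A}(w)=w+\frac{\hcap(A)}{w-x_0}+\LandauO\!\left(\frac{\diam(A)\,\hcap(A)}{|w-x_0|^{2}}\right),$$
valid for a hull $A\subset B(x_0,r)$ and $w$ bounded away from $A$, applied with $\hcap(A_{t,t+h})=2h$, $x_0\to U(t)$ and $\diam(A_{t,t+h})\to0$, I get
$$\frac{g_{t+h}(z)-g_t(z)}{h}=\frac{2}{g_t(z)-U(t)}+\Landauo(1)\qquad(h\downarrow0).$$
The same computation from the left, combined with the continuity of $U$ already established, shows that $t\mapsto g_t(z)$ is differentiable with the asserted derivative, so $g_t$ solves \eqref{ivp} with driving function $U$.

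Finally, for uniqueness, suppose a continuous $V$ also drives \eqref{ivp} to $g_T=g_\Gamma$. Its hulls $K_t$ are increasing and connected with $K_T=\Gamma$ and $\hcap(K_t)=2t$; since a connected sub-hull of the simple arc $\Gamma$ anchored at $\gamma(0)$ must be a sub-arc, the capacity normalization forces $K_t=\Gamma_t$, hence $g_t^V=g_t$ by uniqueness of the hydrodynamic uniformization, and therefore $V(t)=g_t^V(\gamma(t))=U(t)$. I expect the continuity of $U$ in the second step---specifically the control of $\diam(A_{s,t})$ through the boundary behaviour of $g_s$ at the tip---to be the main obstacle, since this is exactly where the simplicity of the curve is genuinely used and where the capacity estimates must be applied uniformly.
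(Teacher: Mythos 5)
Your overall strategy is the standard one --- indeed it is essentially the argument in Lawler's book that the paper cites in place of giving its own proof: reparametrize by half-plane capacity, set $U(t):=g_t(\gamma(t))$, prove continuity of $U$ via the collapse of the image hulls $A_{s,t}=g_s(\gamma((s,t]))$, derive the differential equation from the expansion of $g_A$, and obtain uniqueness by showing that sub-hulls of a simple arc are initial sub-arcs. The capacity reparametrization, the expansion step, and the uniqueness step are sound in outline (the last one modulo the point-set argument that a hull contained in $\Gamma$ must be an initial arc, which is routine).

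The genuine gap is the phrase ``and symmetrically from the left'': the two sides are \emph{not} symmetric. For $t\downarrow s$ with $s$ fixed you apply one fixed map $g_s$ to a curve piece shrinking to the tip, and the Carath\'eodory-type boundary continuity of that single map is exactly what is needed. For $s\uparrow t$ with $t$ fixed --- which is what you need both for left-continuity of $U$ (since $|U(s)-U(s-h)|\leq C\,\diam(A_{s-h,s})$, the attachment point of $A_{s-h,s}$ being $U(s-h)$) and for the left derivative in the ODE step --- the map $g_{s-h}$ varies with $h$, so there is no single conformal map whose boundary continuity you can invoke. What is required is a quantitative estimate, uniform over the whole family $\{g_s\}_{s<t}$, bounding $\diam\bigl(g_s(\gamma([s,t]))\bigr)$ in terms of $\diam(\gamma([s,t]))$; classically this comes from a harmonic-measure argument via the Beurling projection theorem, and it is precisely the technical heart of the Kufarev--Sobolev--Spory{\v{s}}eva/Lawler proof. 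Your closing remark correctly identifies uniformity as the main obstacle, but ``the boundary behaviour of $g_s$ at the tip'' is not a mechanism that can deliver it; as written, the left-continuity of $U$ and the differentiability of $t\mapsto g_t(z)$ remain unproved, and no amount of repeating the right-sided argument will produce them.
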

\begin{proof}
 The first proof was given by Kufarev et al. in \cite{MR0257336}. For a English reference, see \cite{Lawler:2005}, p.\,92f.
\end{proof}

Conversely, equation (\ref{ivp}) does not necessarily generate a slit for a given driving function, see Example \ref{Felix}.
A sufficient condition for getting slits was found by J. Lind, D. Marshall and S. Rohde:\\

According to \cite{MarshallRohde:2005} and \cite{Lind:2005} we define a \textit{quasislit} to be the image of $[0,i]$ under a quasiconformal mapping $Q:\Ha\to\Ha$ with $Q(\Ha)=\Ha$ and $Q(\infty)=\infty.$ In other words, a quasislit is a slit that is a quasiarc approaching $\R$ nontangentially (see Lemma 2.3 in \cite{MarshallRohde:2005}). \\

Let $\Lip$ be the set of all continuous functions $U:[0,T]\to\R$ with $$|U(t)-U(s)|\leq c\sqrt{|s-t|} \quad \text{for some}\quad c>0 \quad \text{and all} \quad s,t\in[0,T].$$
Now the following connection between $\Lip$ and quasislits holds.

\begin{satz}\label{lmr}(Theorem 1.1 in \cite{MarshallRohde:2005} and Theorem 2 in \cite{Lind:2005})
 If $\Gamma$ is a quasislit with driving function $U$, then $U \in \Lip$. Conversely, if $U \in \Lip$ and for every $t\in[0,T]$ there exists an $\eps>0$  such that
$$ \sup_{\substack{r,s\in[0,T]\\ |r-t|,|s-t|<\eps}}\frac{|U(r)-U(s)|}{\sqrt{|r-s|}}<4, $$
% $$ \sup_{t,s\in[0,T],t\not=s}\frac{|U(t)-U(s)|}{\sqrt{|t-s|}}<4, $$
then $\Gamma$ is a quasislit.
\end{satz}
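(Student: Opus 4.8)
\emph{Necessity ($\Gamma$ a quasislit $\Rightarrow U\in\Lip$).} The plan is to exploit the translation and scaling covariance of \eqref{ivp} to localise in time. Fix $t$, put $\tilde U(\tau):=U(t+\tau)-U(t)$, and note that $\tilde g_\tau:=g_{t+\tau}\circ g_t^{-1}(\,\cdot+U(t))-U(t)$ solves \eqref{ivp} driven by $\tilde U$; the associated hull
\[
A:=g_t\big(\gamma((t,t+s])\big)-U(t),\qquad \hcap(A)=2s,
\]
is rooted at $0$ since $g_t(\gamma(t))=U(t)$. As $\tilde U(0)=0$, the quantity to control is $|U(t+s)-U(t)|=|\tilde U(s)|$, and this reduces to geometry through one standard fact and one substantive estimate: the driving value is controlled by the size of its hull, $|\tilde U(s)|\le C_0\operatorname{rad}(A)$ with $\operatorname{rad}(A)=\sup_{z\in A}|z|$; and, the substantive point, a hull that genuinely reaches into $\Ha$ obeys $\operatorname{rad}(A)^2\le C_1\,\hcap(A)=2C_1 s$. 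Combining them yields $|U(t+s)-U(t)|\le C_0\sqrt{2C_1 s}$ as soon as $C_0,C_1$ are uniform in $t$ and $s$.

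The crux is the lower capacity bound $\hcap(A)\ge c\,\operatorname{rad}(A)^2$ with $c$ uniform, which fails for arbitrary thin hulls — a long arc hugging $\R$ has large radius but negligible capacity. The plan is to exclude this using that $\Gamma$ is a quasislit: it admits a global quasiconformal reflection and meets $\R$ nontangentially, and one transports this structure under $g_t$ to show that the hulls $A$ are quasiarcs whose three-point constant and aperture do not degenerate in $t$. For such a hull bounded geometry forces $\hcap\asymp\operatorname{rad}^2$ with constants depending only on those uniform data. I expect the main obstacle to be precisely this uniformity: $g_t$ is conformal only off $K_t$ and need not be globally quasiconformal, so one must show it has bounded hyperbolic distortion near $\gamma((t,\cdot])$ and that the Ahlfors three-point condition — conformally robust — survives the flow with constants independent of $t$, nontangentiality being what prevents the reflection constant from exploding as the tip approaches $\R$.

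\emph{Sufficiency ($U\in\Lip$ with local seminorm $<4\Rightarrow\Gamma$ a quasislit).} Here the goal is to verify that the trace obeys a uniform Ahlfors three-point condition and meets $\R$ nontangentially. The plan is to recast this as an a priori estimate for \eqref{ivp} itself: for real boundary points straddling the tip, track how their $g_t$-images spread apart, a quantity governed by a scalar comparison ODE in which the local $\Lip$-seminorm of $U$ enters as a coefficient. The strict bound $<4$ is exactly the condition guaranteeing that the solution of this comparison ODE stays bounded — equivalently that an associated conformal modulus stays bounded away from $0$ and $\infty$ — so that the three-point constant does not degenerate along the flow. The self-similar drivers $U_\kappa(\tau)=\kappa\sqrt{\tau}$, whose hulls are explicit straight segments, are the model solutions against which the comparison is calibrated and from which the threshold $4$ is read off; their explicit quasiconformal reflections then supply, after localisation by the hypothesis on the $\eps$-neighbourhood and the scaling symmetry $U\mapsto\lambda^{-1}U(\lambda^2\,\cdot)$, a reflection across the generated hull with dilatation uniformly below $1$. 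Since existence of such a global reflection characterises quasiarcs, the hull is a quasislit.

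The unifying difficulty is that every estimate must be uniform in $t$ and in scale, because the constant $4$ is sharp: the comparison must be quantitative, and the strict inequality $<4$ is exactly what upgrades ``bounded distortion'' to ``a reflection of dilatation $<1$'', i.e.\ the distinction between a mere Jordan arc and a quasiarc. I would expect the bulk of the labour to lie in these uniform quasiconformal estimates — transporting the reflection under the non-globally-defined $g_t$ in the necessity direction, and gluing the local self-similar reflections into one globally quasiconformal map in the sufficiency direction — with the capacity and Loewner-flow manipulations playing a supporting, largely computational role.
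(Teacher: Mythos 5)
The first thing to note is that the paper does not prove this statement at all: Theorem \ref{lmr} is an imported result, quoted with attribution from Theorem 1.1 of \cite{MarshallRohde:2005} (necessity, plus sufficiency for small norm) and Theorem 2 of \cite{Lind:2005} (sufficiency with the sharp constant $4$), so there is no internal proof to compare yours against. Judged on its own, your text is a strategy outline rather than a proof: in both directions the decisive quantitative steps are exactly the ones you defer (``I expect the main obstacle to be precisely this uniformity'', ``the bulk of the labour\dots''). For the necessity direction your outline does track the Marshall--Rohde argument: localise by the scaling covariance, bound $|U(t+s)-U(t)|$ by the radius of the image hull $A=g_t\big(\gamma((t,t+s])\big)-U(t)$, and obtain the reverse capacity bound $\hcap(A)\gtrsim \operatorname{rad}(A)^2$ from the fact that a quasislit's sub-hulls stay uniformly ``thick'' under the flow. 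But none of that uniformity is established in your write-up, and it is the entire content of that half of the theorem.

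For the sufficiency direction there is, beyond incompleteness, a concrete calibration error. You claim the threshold $4$ is ``read off'' from the self-similar drivers $U_\kappa(\tau)=\kappa\sqrt{\tau}$. It cannot be: as the paper's Example \ref{Maxime} records, $\kappa\sqrt{\tau}$ generates a straight line segment --- a quasislit --- for \emph{every} $\kappa$, so no threshold is visible in that family. The constant $4$ comes from the time-reversed driver $\kappa\sqrt{1-\tau}$ (the paper's Example \ref{Felix}): for $\kappa\ge 4$ the generated curve returns to $\R$ and the hull fails to be a slit. Correspondingly, Lind's proof of the sharp constant runs through the \emph{backward} equation (\ref{back}) and the welding: one shows that a local seminorm $<4$ forces the hull to be a slit whose welding homeomorphism is quasisymmetric, and then invokes the characterization (Lemma 6 of \cite{Lind:2005}, quoted in the paper's Remark following the Definition) that a slit is a quasislit if and only if its welding is quasisymmetric. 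Your alternative plan --- glue explicit quasiconformal reflections of model hulls into one global reflection --- is essentially Marshall--Rohde's method, which is only known to yield sufficiency for some small, non-sharp norm; at the threshold $4$ there is no evident way to keep the glued dilatation below $1$. So even as a plan, the sufficiency half points at the wrong model family and the wrong mechanism for the constant that makes the theorem sharp.
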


The H\"older constant $4$ in Theorem \ref{lmr} is not necessary for generating quasislits: For any $s\in[0,T)$, the ``right pointwise H\"older norm'', i.e. the value $$\limsup_{h\downarrow0}\frac{|U(s+h)-U(s)|}{\sqrt{h}}$$ can get arbitrarily large, as the driving function $U(t)=c\sqrt{t}$ shows:

\begin{example}\label{Maxime}
Let $U(t)=c\sqrt{t}$ for an arbitrary $c\in\R.$ In this case, the one-slit equation (\ref{ivp}) can be solved explicitly and one obtains for the generated hull $K_t$ at time $t:$ $K_t=\gamma([0,t])$ with $\gamma(t)=2\sqrt{t}\left(\frac{\pi}{\phi}-1\right)^{\frac1{2}-\frac{\phi}{\pi}}e^{i\phi},$ i.e. $K_t$ is a line segment with angle $\phi,$ see Example 4.12 in \cite{Lawler:2005}. The connection between $c$ and $\phi$ is given by
$$c(\phi)=\frac{2(\pi-2\phi)}{\sqrt{\phi(\pi-\phi)}},\qquad \phi(c)=\frac{\pi}{2}\left(1-\frac{c}{\sqrt{c^2+16}}\right).$$
\end{example}

The aim of this paper is to show that the ``left pointwise H\"older-norm'' can also get arbitrarily large within the space of all driving functions that generate quasislits.

\begin{theorem}\label{proop}
 For every $C>0,$ there exists a driving function $U:[0,1]\to\R$ that generates a quasislit and satisfies
$$ \limsup_{h\downarrow0}\frac{|U(1)-U(1-h)|}{\sqrt{h}}=C. $$
\end{theorem}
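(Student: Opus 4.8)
The plan is to exhibit, for each $C$, an explicit quasislit whose driving function has the prescribed one-sided behaviour at $t=1$, recovering the driving function either from Theorem~\ref{lmr} (small $C$) or from Theorem~\ref{slitex} together with the explicit segment of Example~\ref{Maxime} (large $C$). The starting point is that $U(t)=C\sqrt{1-t}$ satisfies $\limsup_{h\downarrow0}\frac{|U(1)-U(1-h)|}{\sqrt h}=C$ and, by the one-line estimate $\frac{|U(t)-U(s)|}{\sqrt{|t-s|}}=\frac{C\sqrt{|t-s|}}{\sqrt{1-s}+\sqrt{1-t}}$, has H\"older-$\tfrac12$ seminorm \emph{exactly} $C$. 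Hence for $C<4$ we have $U\in\Lip$ with local seminorm bounded by $C<4$ everywhere, so the converse half of Theorem~\ref{lmr} applies directly and already produces a quasislit with left pointwise norm $C$. The entire difficulty is therefore the regime $C\ge4$.

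For $C\ge4$ the sufficient condition of Theorem~\ref{lmr} can \emph{never} be invoked at $t=1$: choosing $r=1$ and $s=1-h_n$ along any sequence realising the limsup forces $\sup_{|r-1|,|s-1|<\eps}\frac{|U(r)-U(s)|}{\sqrt{|r-s|}}\ge C\ge4$ for every $\eps>0$. (Heuristically this is unavoidable for the naive candidate: a reversed square root with large coefficient makes the tip be approached by a spiral, which is precisely the failure mode behind the sharpness of the constant $4$.) Consequently, for $C\ge4$ the quasislit property has to be checked \emph{by hand}, from the definition --- the curve must be shown to be a quasiarc approaching $\R$ nontangentially --- and cannot be read off from the H\"older norm.

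I would build the curve near its tip by concatenating the straight segments of Example~\ref{Maxime} at geometrically decreasing scales. Fix $a_n=1-4^{-n}\uparrow1$, so that $\sqrt{1-a_n}=2^{-n}$, and prescribe $U$ on each block $[a_n,a_{n+1}]$ to be a scaled, translated forward square root $c\sqrt{\,\cdot\,}$ followed by a constant piece; near $t=0$ we simply take $U$ constant, so the base meets $\R$ at a right angle and the approach to $\R$ is nontangential. By the standard scaling covariance of (\ref{ivp}) and the concatenation rule, each square-root piece grows a short straight segment (always a slit, by Example~\ref{Maxime}) contributing a tilt, while along each constant piece no new turning is introduced; the capacities are tuned so that the $n$-th block produces a driving-function increment $\delta_n$ of order $2^{-n}$ and geometric diameter of order $2^{-n}$. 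The alternation of tilted and turning-free pieces is what averts the spiral: it keeps the total turning of the arc bounded, so that the limit curve satisfies Ahlfors' three-point condition and is a quasiarc. Choosing the angles, scales and signs of the $\delta_n$ then lets me arrange $\frac{|U(1)-U(a_n)|}{\sqrt{1-a_n}}=\frac{|\sum_{k\ge n}\delta_k|}{2^{-n}}\to C$.

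It then remains to upgrade this to $\limsup_{h\downarrow0}\frac{|U(1)-U(1-h)|}{\sqrt h}=C$: the lower bound comes from the block endpoints $h_n=1-a_n$, and the matching upper bound for \emph{all} $h\downarrow0$ follows from the geometric decay of the $\delta_k$, which makes $|U(1)-U(1-h)|$ comparable to the contribution of the single block containing $1-h$. \emph{The main obstacle} is the quasiarc verification for $C\ge4$: I must show that the conformal distortion incurred when the smaller, later segments are drawn through the uniformising maps of the earlier ones does \emph{not} accumulate, so that the bounded-turning constant stays uniform across all scales. This uniform-across-scales control, not any single estimate, is the crux, and it is precisely the self-similar (geometric) choice of scales that makes it tractable: the distortion is then essentially the same on every block, so a single-block estimate bootstraps to a uniform bound on all of $[0,1]$.
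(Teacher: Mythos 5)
Your case split is sensible, and the $C<4$ half is correct and complete: $U(t)=C\sqrt{1-t}$ has H\"older-$\frac12$ seminorm exactly $C$, so Theorem \ref{lmr} applies and the left limsup at $t=1$ is $C$ (a shortcut the paper does not take, since its single construction covers all $C>0$). But for $C\geq 4$ --- which you rightly identify as the entire content of the theorem --- there are two genuine gaps. First, you never address weldedness, i.e.\ why the tip of the limit curve stays in $\Ha$ rather than colliding with $\R$ or with the curve itself. This is not a technicality: it is exactly the failure mode of the naive candidate (Example \ref{Felix}: $c\sqrt{1-t}$ with $c\geq4$ swallows a region because the curve hits $\R$ at $t=1$), and nothing in your construction rules it out; your nontangentiality remark concerns the base of the slit at $t=0$, not the tip at $t=1$, where the danger lies. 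The paper devotes Section 2 to this point (Proposition \ref{Fritz}, Lemma \ref{stran}, Corollary \ref{cori}) and designs its driving function to return to $U(1)=0$ infinitely often precisely so that Corollary \ref{cori} applies; your blocks, with increments $\delta_n$ of one tuned sign pattern, come with no such guarantee.

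Second, the quasiarc verification. You acknowledge that the crux is that conformal distortion must not accumulate across scales, but you supply no mechanism, and the heuristic you do offer --- bounded total turning implies Ahlfors' three-point condition --- is false: a curve making a single U-turn and running back parallel to itself at distance $\eps$ has total turning $\pi$ but three-point ratio of order $1/\eps$. The missing idea is the paper's: make the construction \emph{exactly} self-similar, so that the successive pieces satisfy $S_{n+1}=I(S_n)$ for one fixed map $I(z)=g_{1/2}^{-1}(z/\sqrt{2})$. Then the Denjoy--Wolff theorem gives convergence of the iterates $I^n$ to a point $S_\infty$; weldedness and compactness force $S_\infty\in\Ha$, where $I$ is conformal with $|I'(S_\infty)|<1$; and this geometric contraction converts the per-block distortion errors into convergent products $\prod_j(1+a_1c^j)/\prod_j(1-a_2c^j)$, which is what makes the bounded-turning constant uniform over all scales. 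Your blocks (square-root piece plus constant piece, with varying signs and ``tuned'' capacities) are not exactly self-similar, so there is no single iterated map, hence no Denjoy--Wolff fixed point and no fixed-point distortion estimate; the claim that ``a single-block estimate bootstraps to a uniform bound'' is then unsupported, because one would have to control compositions of infinitely many \emph{different} conformal maps. Also note that your pieces are straight segments only in the uniformized coordinates; in the plane the later pieces are curved arcs, so even the single-block geometry requires the distortion estimates you are deferring.
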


We will prove Theorem \ref{proop} in Section 3. In Section 2 we discuss
some necessary and sufficient conditions on driving functions for generating  slits.

\section{Conformal Welding in the Loewner equation}

In the following we denote by $B(z,r),$ $z\in\C, r>0$, the Euclidean ball with centre $z$ and radius $r$, i.e. $B(z,r)=\{w\in\C\;|\; |z-w|<r\}.$\\

As already mentioned, there are continuous driving functions that generate hulls which are not slits.

\begin{example}\label{Felix}
 Consider the driving function $U(t) = c\sqrt{1-t}$ with $c\geq 4$ and $t\in [0,1]$: The generated hull $K_t$ is a slit for every $t\in(0,1),$ but at $t=1$ this slit hits the real axis at an angle $\varphi$ which can be calculated directly (see \cite{LindMR:2010}, chapter 3): $$ \varphi = \pi-\frac{2\pi\sqrt{c^2-16}}{\sqrt{c^2-16}+c}. $$ Its complement with respect to $\Ha$ has two connected components and $K_1$ is the closure of the bounded component  and consequently not a slit, see Figure 1.
\end{example}

\begin{figure}[h] 
    \centering
   \includegraphics[width=125mm]{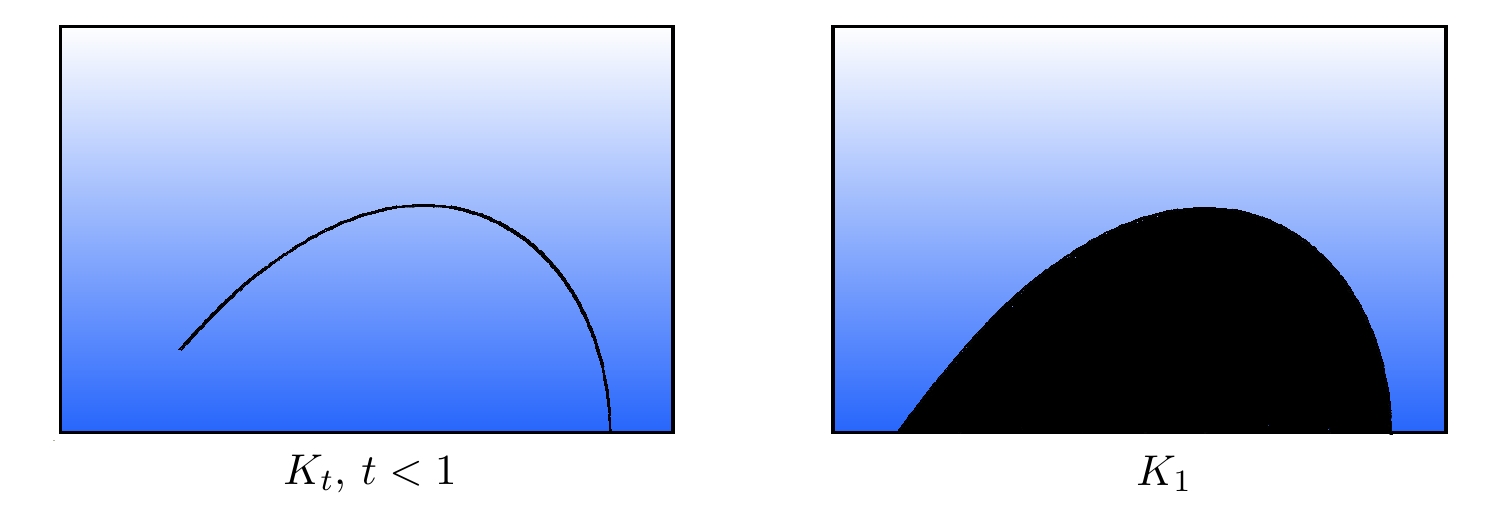}
\caption{Example \ref{Felix} with $c=5.$}
 \end{figure}
There are further, more subtle obstacles preventing the one-slit equation from producing slits as the following example shows.

\begin{example}\label{spiral}
There exists a driving function $U\in \Lip$ such that $K_t$ is a simple curve $\gamma$ for all $t\in [0,T)$ and for $t\to T$ this curve wraps infinitely often around $B(2i,1).$ Hence $K_T=\gamma[0,1)\cup \overline{B(2i,1)}$ is not locally connected, see Example 4.28 in \cite{Lawler:2005}.
\end{example}

In order to distinguish between these two kinds of obstacles, one has introduced two further notions, which are more general than ``the hull is a slit''.

For this we have to take a look at the so called backward equation. Let $U:[0,T]\to\R$ be continuous. Furthermore, let $g_t$ be the solution to (\ref{ivp}). The backward equation is given by
\begin{equation}\label{back}
 \dot{x}(t) = \frac{-2}{x(t)-U(T-t)}, \quad x(0)=x_0\in\overline{\Ha}\setminus\{U(T)\}.
\end{equation}
For $x_0\in \Ha,$ the solution $x(t)$ exists for all $t\in[0,T]$ and the function $f_T:x_0\mapsto x(T)$ satisfies $f_T=g_T^{-1}.$

For $x_0\in \R\setminus\{U(T)\},$ the solution may not exist for all $t\in[0,T].$ If a solution ceases to exist, say at $t=s$, it will hit the singularity, i.e. $\lim_{t\to s}x(t)=U(T-s).$

Now suppose that two different solutions $x(t),y(t)$ with $x(0)=x_0<y_0=y(0)$ meet the singularity at $t=s$, i.e. $\lim_{t\to s}x(s)=\lim_{t\to s}y(s)=U(T-s)$. Then $x_0$ and $y_0$ lie on different sides  with respect to $U(T)$, that is $x_0<U(T)<y_0.$ Otherwise the difference $y(t)-x(t)$ would satisfy 
$$ \dot{y}(t)-\dot{x}(t)= \frac{2(T-t)(y(t)-x(t))}{(y(t)-U(T-t))(x(t)-U(T-t))}>0$$
for all $0\leq t<s$ and thus, $\lim_{t\to s}(x(t)-y(t))=0$ would be impossible.

Consequently, for any $s\in(0,T]$, there are at most two initial values so that the corresponding solutions will meet in $U(T-s)$.

\begin{definition} 
Let $\{K_t\}_{t\in[0,T]}$ be a family of hulls generated by the one-slit equation with driving function $U$.
\begin{enumerate}
 \item[(1)] $\{K_t\}_{t\in[0,T]}$ \emph{is welded} if for every $s\in(0,T]$ there exist exactly two real values $x_0,y_0$ with $x_0<U(T)<y_0$ such that the corresponding solutions $x(t)$ and $y(t)$ of (\ref{back}) with $x(0)=x_0,$ $y(0)=y_0$ satisfy $x(s)=y(s)=U(T-s).$\\[-0.2cm]
\item[(2)] $\{K_t\}_{t\in[0,T]}$ is \emph{generated by a curve} if there exists a simple curve $\gamma:[0,T]\to\overline{\Ha},$ such that $\Ha\setminus K_t$ is the unbounded component of $\Ha\setminus \gamma[0,t]$ for every $t\in[0,T].$

\end{enumerate}
\end{definition}

\begin{remark}
 Several properties of hulls that are generated by curves are described in \cite{Lawler:2005}, Section 4.4. The notion of welded hulls was introduced in \cite{MarshallRohde:2005} for the radial Loewner equation. The chordal case is considered in \cite{Lind:2005}. Informally speaking, welded hulls have a left and a right side.

If $\{K_t\}_{t\in[0,T]}$ is welded and the interval $I=[a,b]$ is the cluster set of $K_T$ with respect to $g_T$, then $a<U(T)<b$ and for every $a\leq x_0<U(T)$ there exists $U(T)<y_0\leq b$ such that the solutions to (\ref{back}) with initial values $x_0$ and $y_0$ hit the singularity at the same time. This gives a welding homeomorphism $h:[a,b]\to[a,b]$ by defining $h(x_0):=y_0,$ $h(y_0):=x_0,$ $h(U(T)):=U(T).$

Furthermore, the hulls $\{K_t\}_{t\in[0,T]}$ describe a quasislit if and only if they describe a slit and the homeomorphism $h$ is a quasisymmetric function, see Lemma 6 in \cite{Lind:2005}.
\end{remark}

The hull of Example \ref{spiral}, which is sketched in picture a) of Figure 2, is not generated by a curve. Picture c) shows an  example of a hull that is generated by a curve. Here, the curve hits itself and the real axis and consequently, this hull is not welded. The hulls in picture b), which form a ``topologist's sine curve'' approaching a compact interval on $\R$, are simple curves before the ``sine curve'' touches $\R$, but then, the corresponding hull is neither welded nor generated by a curve. 
\begin{figure}[h]
 \centering \includegraphics[width=125mm]{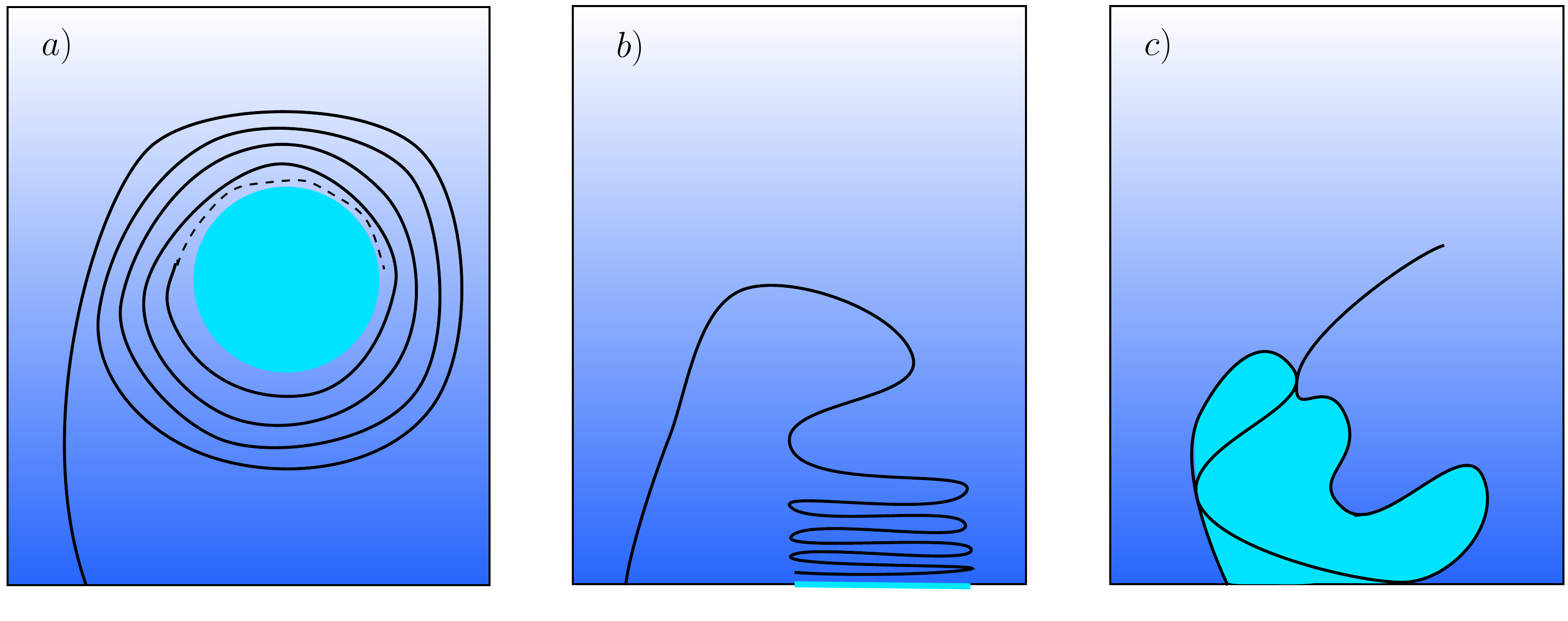}
\caption{Three cases of hulls that are not slits.}
\end{figure}

\begin{proposition} 
The hulls $\{K_t\}_{t\in[0,T]}$ describe a slit if and only if $\{K_t\}_{t\in[0,T]}$ is generated by a curve and it is welded.
\end{proposition}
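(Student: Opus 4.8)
The plan is to establish the two implications separately, using that being \emph{generated by a curve} controls the topology of the complement while being \emph{welded} controls the two-sided accessibility of the boundary.

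For the direction ``slit $\Rightarrow$ curve-generated and welded'', suppose $K_t=\gamma((0,t])$ for a simple curve $\gamma$ with $\gamma(0)\in\R$ and $\gamma((0,T])\subset\Ha$. Then $\Ha\setminus K_t=\Ha\setminus\gamma((0,t])$ is itself connected (it is simply connected by the definition of a hull), so it coincides with its unbounded component and the same $\gamma$ witnesses that $\{K_t\}$ is generated by a curve. For weldedness I would use that every point $\gamma(\sigma)$ with $\sigma\in(0,T)$, as well as the base point $\gamma(0)\in\R$, is accessible from $\Ha\setminus K_T$ from exactly two sides, i.e. determines exactly two prime ends of $\Ha\setminus K_T$, one lying to the left and one to the right of the tip. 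Under $g_T$ these two sides map to two distinct real points $x_\sigma<U(T)<y_\sigma$ of the cluster interval, and the reverse Loewner flow started at $x_\sigma$ and $y_\sigma$ carries both of them back to the image of $\gamma(\sigma)$, i.e. makes them collide at $U(\sigma)$ at time $s=T-\sigma$ (this is the conformal-welding description of a slit, cf. \cite{Lawler:2005}, Section 4.4, and \cite{Lind:2005}). Combined with the observation already made above --- that for each $s$ at most two initial values can meet at $U(T-s)$ and, if they do, they lie on opposite sides of $U(T)$ --- this yields \emph{exactly} two for every $s\in(0,T]$, hence weldedness.

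For the converse ``curve-generated and welded $\Rightarrow$ slit'', suppose $\{K_t\}$ is generated by a curve $\gamma$ and is welded. Curve-generation already excludes the obstruction of Example \ref{spiral}: it furnishes a continuous $\gamma$ for which $K_t$ is the complement in $\Ha$ of the unbounded component of $\Ha\setminus\gamma[0,t]$, so the only way $\{K_t\}$ can fail to describe a slit is that $\gamma$ meets itself or meets $\R\setminus\{\gamma(0)\}$, thereby enclosing a bounded pocket that is absorbed into the hull (as in Example \ref{Felix}). I would rule this out by contradiction: assume such a contact occurs first at capacity time $\sigma\in(0,T]$, say $\gamma(\sigma)\in\gamma([0,\sigma))\cup(\R\setminus\{\gamma(0)\})$. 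The resulting pocket lies in $K_T$, so the inner side of $\gamma$ along the pocket is no longer accessible from $\Ha\setminus K_T$; consequently the boundary point $\gamma(\sigma)$ is approached from strictly fewer than two sides of the exterior domain. Translating this back through $g_T$ and the reverse flow as above, the singularity $U(\sigma)=U(T-s)$ with $s=T-\sigma$ is then reached by at most one backward trajectory, contradicting weldedness. Hence $\gamma$ is injective on $(0,T]$ and stays in $\Ha$, so $K_t=\gamma((0,t])$ is a slit.

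The main obstacle is the translation step common to both directions: making rigorous the correspondence between, on the one hand, pairs of reverse-Loewner trajectories that collide at the singularity $U(T-s)$ at time $s$ and, on the other hand, the two accessible sides (prime ends) of the hull-boundary point $\gamma(T-s)$. This is precisely the conformal-welding dictionary of the chordal equation, and its careful justification rests on the boundary behaviour of the maps $g_t$ and of $f_t=g_t^{-1}$ developed in \cite{Lawler:2005}, Section 4.4, and \cite{Lind:2005}. Once this dictionary is in place, both implications reduce to the elementary topological statements above, and the ``at most two'' bound already established in the text supplies the remaining half of the ``exactly two'' required for weldedness.
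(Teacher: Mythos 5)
Your route is genuinely different from the paper's: the paper settles this proposition in one line, treating the implication ``slit $\Rightarrow$ curve-generated and welded'' as immediate and citing Lemma 4.34 of \cite{Lawler:2005} for the converse, whereas you attempt a self-contained argument for both directions based on the dictionary between pairs of backward trajectories colliding at the singularity and the accessible sides (prime ends) of points of the hull. Your forward direction is sound modulo that dictionary, and deferring the dictionary itself to \cite{Lawler:2005}, Section 4.4, is comparable in spirit to what the paper does; the added value of your write-up is the explicit topological reduction.

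However, your converse contains a step that fails as written. You argue that at the first contact time $\sigma$ the point $\gamma(\sigma)$ ``is approached from strictly fewer than two sides of the exterior domain,'' hence at most one backward trajectory reaches $U(\sigma)$ at time $T-\sigma$. This is correct when the contact is with $\R$, but false in the self-intersection case $\gamma(\sigma)=\gamma(\sigma')$, $\sigma'<\sigma$: there the three strands of $\gamma$ meeting at that point cut a neighbourhood of it into three sectors, of which only one (the pocket) is swallowed, so $\gamma(\sigma)$ is in general accessible from exactly two sides, namely the two sectors adjacent to the earlier strand through $\gamma(\sigma')$. (This case is not vacuous: although the paper's Definition literally asks for a \emph{simple} generating curve, its own discussion of Figure 2c) and Lawler's definition allow self-intersecting generating curves, and your proof explicitly admits the case $\gamma(\sigma)\in\gamma([0,\sigma))$.) Weldedness does still fail, but for a different reason: those two prime ends are absorbed by the growing hull already at time $\sigma'$, so their backward trajectories collide at $U(\sigma')$ at time $T-\sigma'$, and consequently no trajectory at all collides at $U(\sigma)$ at time $T-\sigma$, since the pocket's interior and its inner boundary carry no prime ends of $\Ha\setminus K_T$; alternatively, one can observe that the loop points $\gamma(\tau)$ with $\sigma'<\tau<\sigma$ have at most one accessible side. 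Either repair restores your contradiction, but without one of them the case analysis in your converse is incomplete.
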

\begin{proof}
For the non-trivial direction of the statement, see Lemma 4.34 in \cite{Lawler:2005}.
\end{proof}

The following statement follows directly from the proof of Lemma 3 in \cite{Lind:2005}. For the sake of completeness we include the proof.

\begin{proposition}\label{Fritz} Let $\{K_t\}_{t\in[0,T]}$ be family of hulls generated by the one-slit equation. The following statements are equivalent:
\begin{itemize}
 \item[a)] $\{K_t\}_{t\in[0,T]}$ is welded.
\item[b)] For every $\tau\in[0,T)$ there exists $\eps>0$ such that for all $x_0\in\R\setminus\{U(\tau)\}$ the solution $x(t)$ of 
\begin{equation}\label{resp} \dot{x}(t)=\frac{2}{x(t)-U(t)}, \qquad x(\tau)=x_0, \end{equation} does not hit $U(t)$ for $t<T$ and satisfies $|x(T)-U(T)|>\eps$. 
\end{itemize}
\end{proposition}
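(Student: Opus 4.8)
The plan is to prove both implications by exploiting the elementary time-reversal linking the forward equation (\ref{resp}) started at time $\tau$ to the backward equation (\ref{back}). Concretely, if $\xi$ solves (\ref{resp}) on $[\tau,T]$ without ever meeting $U$, then $y(u):=\xi(T-u)$ solves (\ref{back}) on $[0,T-\tau]$ with $y(0)=\xi(T)$ and $y(T-\tau)=\xi(\tau)$; conversely a solution of (\ref{back}) that avoids the singularity up to backward-time $s$ reverses to a solution of (\ref{resp}) started at $\tau=T-s$ and surviving up to $T$. Thus, writing $s=T-\tau$, the forward trajectories from $\tau$ that reach $T$ are in bijection with the backward trajectories from time $0$ that survive up to time $s$, the correspondence being $\xi(T)\leftrightarrow y(0)$ and $x_0=\xi(\tau)\leftrightarrow y(s)$. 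Throughout I will use ODE uniqueness and continuous dependence away from the singularity, together with the separation identity already computed in the excerpt (and its forward analogue $\dot y-\dot x=-2(y-x)/[(y-U)(x-U)]$), which shows that two solutions lying on the same side of $U$ cannot cross; hence the endpoint maps below are strictly monotone in the initial value.

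For (a) $\Rightarrow$ (b), fix $\tau$ and set $s=T-\tau$. Welding furnishes the unique pair $\alpha<U(T)<\beta$ of initial values for (\ref{back}) whose solutions meet at $U(\tau)=U(T-s)$ exactly at time $s$. Using the separation identity one checks that a right-side backward solution starting further from $U(T)$ hits the singularity later; consequently the backward solutions surviving up to time $s$ are precisely those started from $w\le\alpha$ or $w\ge\beta$, and $w\mapsto y(s)$ is a monotone bijection of $(-\infty,\alpha]\cup[\beta,\infty)$ onto $\R$. Reversing time, every $x_0\neq U(\tau)$ is the initial value of a forward trajectory from $\tau$ which survives to $T$ with endpoint $\xi(T)=w(x_0)\in(-\infty,\alpha]\cup[\beta,\infty)$; by uniqueness this is \emph{the} forward trajectory from $(\tau,x_0)$, so it does not meet $U$ before $T$, and $|\xi(T)-U(T)|\ge\min\{U(T)-\alpha,\ \beta-U(T)\}=:\eps>0$. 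This is exactly statement (b).

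For (b) $\Rightarrow$ (a), fix $s\in(0,T]$, put $\tau=T-s$, and take the $\eps>0$ from (b). For $x_0<U(\tau)$ the forward trajectory from $(\tau,x_0)$ stays to the left of $U$ and reaches $T$ with $\xi(T)<U(T)-\eps$; since same-side trajectories do not cross, $x_0\mapsto\xi(T)$ is increasing, so $\xi(T)\uparrow\alpha\le U(T)-\eps$ as $x_0\uparrow U(\tau)$, and symmetrically $\xi(T)\downarrow\beta\ge U(T)+\eps$ from the right. Reversing time and letting $x_0\to U(\tau)$, continuous dependence shows that the backward solutions from $\alpha$ and from $\beta$ both reach $U(T-s)$ at time $s$, i.e.\ both meet the singularity there, with $\alpha<U(T)<\beta$ on opposite sides. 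Since the excerpt already established that at most two initial values can produce solutions meeting at $U(T-s)$, these are exactly two, which is welding.

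The main obstacle is the limiting step in each direction: one must verify that the limit trajectory (from $\alpha$, respectively $\beta$) meets the singularity \emph{exactly} at time $s$ — neither earlier nor merely grazing $U(T-s)$ — and that the monotonicity of the hitting time in the initial value is genuine. Both rest on the non-crossing provided by the separation identity and on continuous dependence away from the singular point; the delicate issue is uniformity as the initial value tends to $U(\tau)$, where the vector field blows up, so that the interchange of the limit $x_0\to U(\tau)$ with the flow up to time $s$ is justified.
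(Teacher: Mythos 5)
Your time-reversal framework matches the paper's, but the step carrying the (a) $\Rightarrow$ (b) direction is circular. From monotonicity of hitting times you conclude that the backward solutions surviving to time $s$ are exactly those started in $(-\infty,\alpha]\cup[\beta,\infty)$, and then assert ``consequently'' that $w\mapsto y_w(s)$ is a bijection onto $\R$. Monotonicity and non-crossing give injectivity and that the image of $(-\infty,\alpha)$ is an interval $(-\infty,L)$ with $L=\lim_{w\uparrow\alpha}y_w(s)\le U(\tau)$; they do \emph{not} give $L=U(\tau)$, i.e.\ surjectivity. And surjectivity is not a technical refinement: after time reversal, ``every $p\neq U(\tau)$ is the time-$s$ value of a surviving backward solution'' is literally statement (b) (the $\eps$-bound then follows because the surviving initial values lie in $(-\infty,\alpha]\cup[\beta,\infty)$). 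Equivalently, what is missing is continuity of the endpoint map $w\mapsto y_w(s)$ at $w=\alpha$, where the solution just grazes the singularity --- exactly the kind of uniformity your closing paragraph concedes is unjustified. So as written, this direction assumes what is to be proved.

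The paper closes this gap with a short trapping argument that you never actually run, although you have all its ingredients: reverse the backward welding solution from $\alpha$ to get a forward solution $\chi$ on $(\tau,T]$ with $\chi(t)\to U(\tau)$ as $t\downarrow\tau$ and $\chi(t)<U(t)$ throughout; a forward solution started at $x_0<U(\tau)$ lies below $\chi$ initially and can never cross it (uniqueness away from the singularity), hence never meets $U$, survives to $T$, and ends at $x(T)<\chi(T)=\alpha$. This yields existence and the bound $\eps=\min\{U(T)-\alpha,\ \beta-U(T)\}$ in one stroke, and surjectivity of the backward flow becomes a corollary rather than a tool. In the (b) $\Rightarrow$ (a) direction you follow the paper: the endpoints increase to a limit $\alpha$, and one must show the backward solution from $\alpha$ hits the singularity \emph{exactly} at time $s$; the sandwich between the reversed forward solutions and the singularity gives the limit $U(T-s)$ only on the interval where that solution exists, so the substantive issue is ruling out that it dies before time $s$. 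The paper is equally terse on this point (it asserts the limit outright, deferring to Lind's Lemma 3), so there you match the source --- but your final paragraph explicitly leaves it as an obstacle rather than an argument, so the proposal remains incomplete there as well.
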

\begin{proof}
$a)\Longrightarrow b):$ Firstly, the solution $x(t)$ to (\ref{resp}) exists locally, say in the interval $[\tau, T^*),$ $T^*\leq T.$ 
Now we know that there are $x_1^0,x_2^0$ with $x_1^0<x_2^0,$ such that the solutions $x_1(t)$ and $x_2(t)$ to equation (\ref{back}) with initial values $x_1^0$ and $x_2^0$ respectively hit the singularity $U(\tau)$ at $s=T-\tau$. But this implies that $x(t)$ can be extended to the interval $[0,T^*]$ with $|U(T^*)-x(T^*)|<\eps,$ where $\eps:=\min\{U(T^*)-x_1(T-T^*),x_2(T-T^*)-U(T^*)\}.$

$b)\Longrightarrow a):$ Let $\tau=T-s.$ We set $a_n:=U(\tau)-\frac{1}{n}$ for all $n\in\N.$ The solution $x_n(t)$ of (\ref{resp}) with initial value $a_n$ exists up to time $T$. Hence we can define $\xi_n:=x_n(T)$ for all $n\geq N$ and we have $U(T)-\xi_n>\eps.$ The sequence $\xi_n$ is increasing and bounded above, and so it has a limit $x_0<U(T).$ Then the solution $x(t)$ of (\ref{back}) with $x_0$ as initial value satisfies $\displaystyle \lim_{t\to s}x(t)=\lim_{n\to\infty}a_n=U(\tau)=U(T-s).$

The second value $y_0$ can be obtained in the same way by considering the sequence $U(\tau)+\frac{1}{n}$ instead of $a_n.$
\end{proof}

%\begin{example}
 %It is possible to generate space-filling curves by the one-slit equation. The Hilbert space-filling curve, e.g., can be generated by a driving function which is in $\Lip$, see Corollary 1.4 in \cite{spacefill}. The corresponding hulls are generated by a curve, which is ``self-touching everywhere'', but it is not welded.
%\end{example}

\begin{remark} The proof of Proposition 3.1 in \cite{MR2920433} implies the following necessary condition for getting welded hulls:
 If $K_T$ is welded, then, for every $s\in(0,T],$ we have

{ \small \begin{eqnarray*}
 \limsup_{h\downarrow0}\frac{|U(s)-U(s-h)|}{\sqrt{h}}<& 4& \quad \;\;(\text{\bf ``regular case''}), \;\; \text{or}    \\     
 \liminf_{h\downarrow0}\frac{|U(s)-U(s-h)|}{\sqrt{h}} <& 4& \leq \limsup_{h\downarrow0}\frac{|U(s)-U(s-h)|}{\sqrt{h}} \;\;({\text{\bf ``irregular  case''}}). \end{eqnarray*} }

Conversely, if the regular condition holds for every $s\in(0,T],$ then $K_T$ is welded, see \cite{Lind:2005}, Section 5.
\end{remark}

For driving functions which are ``irregular'' in at least one point, it is somehow harder to find out whether the generated hulls are welded or not. Here we derive a sufficient condition for a very special case. This case will be needed in the proof of Theorem \ref{proop}.

\begin{lemma}\label{stran}
Let $U:[0,1]\to\R$ be continuous with $U(1)=0$ and let $\{K_t\}_{t\in[0,1]}$ be the hulls generated by equation (\ref{ivp}). Suppose that there are two increasing sequences $s_n,t_n$ of positive numbers with $s_n,t_n\to1, 
$ such that for $$\overline{M}_n:=\max_{s_n\leq t\leq1}\{U(t)\}\quad \text{and} \quad \underline{M}_n:=\min_{t_n\leq t\leq1}\{U(t)\}$$ the two inequalities 
$$4(1-s_n)+U(s_n)^2-2U(s_n)\overline{M}_n>0 \quad \text{and} \quad 4(1-t_n)+U(t_n)^2-2U(t_n)\underline{M}_n>0$$
 hold for all $n\in\N.$ If $K_t$ is welded for all $t\in(0,1)$, then so is $K_1$.
\end{lemma}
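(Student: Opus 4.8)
The plan is to verify the welding criterion of Proposition \ref{Fritz} at the endpoint $T=1$. By that proposition it suffices to produce, for every $\tau\in[0,1)$, an $\eps>0$ so that every solution $x(t)$ of \eqref{resp} with $x(\tau)=x_0\neq U(\tau)$ avoids the singularity for $t<1$ and satisfies $|x(1)|>\eps$ (recall $U(1)=0$). The non-collision part is immediate from the hypothesis: applying the welding of $K_{t_0}$ for each $t_0<1$ shows that a solution started off the singularity at a time $\tau<t_0$ cannot reach $U$ before $t_0$, and letting $t_0\uparrow1$ gives non-collision on all of $[\tau,1)$. In particular such a solution stays strictly on one side of $U$ throughout $[\tau,1)$ and, being monotone, extends to $t=1$; thus only the uniform gap $|x(1)|>\eps$ remains.

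I would now fix $\tau$, choose $n$ so large that $s_n,t_n>\tau$, and treat the two sides separately, \emph{pairing the minimum $\underline M_n$ with the right side and the maximum $\overline M_n$ with the left side}. For a right solution one has $x(t)>U(t)\ge\underline M_n$ on $[t_n,1)$. Freezing the driving term at its minimum, let $w$ solve $\dot w=2/(w-\underline M_n)$ with $w(t_n)=x(t_n)$; since $U(t)\ge\underline M_n$, inspecting $\tfrac{d}{dt}(x-w)$ at points where $x=w$ gives the comparison $x\ge w$ on $[t_n,1]$. The frozen equation integrates explicitly via $\tfrac{d}{dt}(w-\underline M_n)^2=4$, so
\[ w(1)=\underline M_n+\sqrt{\,(x(t_n)-\underline M_n)^2+4(1-t_n)\,}. \]
This is increasing in $x(t_n)\ge\underline M_n$, whence, using $x(t_n)>U(t_n)$,
\[ x(1)\ge w(1)>\underline M_n+\sqrt{\,(U(t_n)-\underline M_n)^2+4(1-t_n)\,}=:\eps_+, \]
and $\eps_+>0$ is exactly equivalent to the hypothesis $4(1-t_n)+U(t_n)^2-2U(t_n)\underline M_n>0$.

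The left side is the mirror image: freezing at $\overline M_n\ge U(t)$ and using $U(s_n)\le\overline M_n$, the comparison gives $x(t)\le z(t)$ on $[s_n,1]$ for $z$ solving $\dot z=2/(z-\overline M_n)$, and since $z(1)$ is increasing in $x(s_n)\le\overline M_n$,
\[ x(1)\le z(1)<\overline M_n-\sqrt{\,(U(s_n)-\overline M_n)^2+4(1-s_n)\,}=:-\eps_-, \]
with $\eps_->0$ equivalent to $4(1-s_n)+U(s_n)^2-2U(s_n)\overline M_n>0$. Taking $\eps:=\tfrac12\min(\eps_+,\eps_-)$ then verifies condition b) of Proposition \ref{Fritz} for this $\tau$; as $\tau\in[0,1)$ was arbitrary, $K_1$ is welded.

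The computational heart is the pairing of extrema with sides together with the elementary observation that $U(t_n)$ lies on the increasing branch of the parabola $v\mapsto 4(1-t_n)+v^2-2\underline M_n v$ beyond its vertex $v=\underline M_n$ (and $U(s_n)$ before the vertex $v=\overline M_n$ of the corresponding parabola); this is precisely what converts the two hypotheses into the positive gaps $\eps_\pm$ and dictates which extremum must control which side. I expect the main subtlety to lie not in these one-line estimates but in the comparison/existence bookkeeping: justifying that the solutions genuinely stay on one side of $U$ and persist up to $t=1$ (this is exactly where the welding of $K_t$ for $t<1$ is used), and checking that the comparison inequalities survive at the endpoint, where a priori $x(1)$ might touch $U(1)=0$. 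Once these points are settled, the displayed estimates furnish a uniform gap and the lemma follows from Proposition \ref{Fritz}.
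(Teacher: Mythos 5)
Your proposal is correct and takes essentially the same route as the paper's own proof: both verify condition b) of Proposition \ref{Fritz} at $T=1$ (using the welding of $K_t$, $t<1$, to get non-collision and existence of solutions up to time $1$), and both then compare with the explicitly solvable frozen equations $\dot z=2/(z-\overline{M}_n)$ for solutions below $U$ and $\dot w=2/(w-\underline{M}_n)$ for solutions above $U$, converting the two hypotheses into strict, $x_0$-independent gaps at $t=1$. The differences (choice of the index $n$, writing out both sides rather than invoking symmetry) are purely cosmetic.
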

\begin{proof}
Let $\tau\in[0,1)$ and $x_0\in\R\setminus\{U(\tau)\}.$
By Proposition \ref{Fritz} we know that the solution $x(t)$ of the initial value problem
 $$ x(\tau)=x_0, \qquad \dot{x}(t)=\frac{2}{x(t)-U(t)}$$
 exists until $t=1$ and we have to show that there is a lower bound for $|x(1)-U(1)|$ which is independent of $x_0$.

 Assume that $x_0<U(\tau)$. Then $x(t)$ is decreasing and we have $x(s_m)<U(s_m)$ with $m:=\min\{n\in\N \; |\; s_n\geq \tau\}.$ The initial value problem 
$$ \dot{y}(t)=\frac{2}{y(t)-\overline{M}_m}, \qquad  y(s_m)=x(s_m),$$
has the solution $y(t)=\overline{M}_m-\sqrt{(M_m-x(s_m))^2+4(t-s_m)}$. Now we have
 $$\dot{x}(t)\leq\frac{2}{x(t)-\overline{M}_m} \quad \text {for all} \; t\in [s_m,1)$$ and $x(s_m)=y(s_m)$. Consequently,
 \begin{eqnarray*}
  &&x(1)\leq y(1)=\overline{M}_m-\sqrt{(\overline{M}_m-x(s_m))^2+4(1-s_m)}\\
&&<\underbrace{\overline{M}_m-\sqrt{(\overline{M}_m-U(s_m))^2+4(1-s_m)}}_{=:L_1}<\overline{M}_m-\sqrt{\overline{M}_m^2}=0.
 \end{eqnarray*}
The case $x_0>U(\tau)$ can be treated in the same way and gives a bound $L_2>0$ for $x(1)=x(1)-U(1).$
Thus, the condition in Proposition \ref{Fritz} b) is satisfied for $\eps=\min\{L_1,L_2\}$ and it follows that $K_1$ is welded.
\end{proof}
\begin{corollary}\label{cori}
 If $K_t$ is welded for all $t\in(0,1)$ and there are two increasing sequences $s_n,t_n$ of positive numbers with $s_n, t_n\to1,$ and $U(s_n)\leq U(1) \leq U(t_n)$ for all $n,$ then $K_1$ is welded, too.
\end{corollary}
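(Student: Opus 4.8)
The plan is to deduce the corollary directly from Lemma \ref{stran} by showing that its two capacity-type inequalities are \emph{automatically} satisfied once one knows $U(s_n)\leq U(1)\leq U(t_n)$.

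First I would remove the normalization $U(1)=0$ that Lemma \ref{stran} assumes, by invoking translation invariance of equation (\ref{ivp}). Replacing $U$ by $U-U(1)$ and $g_t$ by $z\mapsto g_t(z+U(1))-U(1)$ produces a solution of the Loewner equation driven by $U-U(1)$ whose hulls are the translates $K_t-U(1)$. Since the backward equation (\ref{back}) transforms in the same way under $x\mapsto x-U(1)$, and weldedness is phrased purely in terms of solutions of (\ref{back}) meeting the singularity, weldedness is preserved by this shift. Hence it suffices to treat the normalized case $U(1)=0$, in which the hypothesis becomes $U(s_n)\leq 0\leq U(t_n)$.

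Next I would record two elementary sign facts: since $t=1$ lies in both ranges of optimization, $\overline{M}_n=\max_{s_n\leq t\leq1}U(t)\geq U(1)=0$ and $\underline{M}_n=\min_{t_n\leq t\leq1}U(t)\leq U(1)=0$. Combined with $U(s_n)\leq0$ and $U(t_n)\geq0$, the cross terms then have the favourable sign $-2U(s_n)\overline{M}_n\geq0$ and $-2U(t_n)\underline{M}_n\geq0$. Consequently
$$ 4(1-s_n)+U(s_n)^2-2U(s_n)\overline{M}_n\geq 4(1-s_n)>0, $$
and likewise $4(1-t_n)+U(t_n)^2-2U(t_n)\underline{M}_n\geq 4(1-t_n)>0$, the strict positivity coming from $s_n,t_n<1$. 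Thus both inequalities of Lemma \ref{stran} hold for every $n$, and since $K_t$ is welded for all $t\in(0,1)$ by assumption, the Lemma applies and yields that $K_1$ is welded.

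There is no genuine obstacle here; the only point that requires care is the reduction to $U(1)=0$, since Lemma \ref{stran} is stated under that normalization (its proof uses $\sqrt{\overline{M}_m^2}=\overline{M}_m$ together with $U(1)=0$ at the final step). Once translation invariance is in hand, the corollary is merely the observation that the sign condition $U(s_n)\leq U(1)\leq U(t_n)$ forces the two quadratic expressions appearing in the Lemma to be bounded below by the manifestly positive capacity terms $4(1-s_n)$ and $4(1-t_n)$.
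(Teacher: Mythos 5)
Your proof is correct and follows essentially the same route as the paper: reduce to $U(1)=0$, observe that $U(s_n)\leq 0\leq \overline{M}_n$ and $\underline{M}_n\leq 0\leq U(t_n)$ make the cross terms $-2U(s_n)\overline{M}_n$ and $-2U(t_n)\underline{M}_n$ nonnegative, and then invoke Lemma \ref{stran}. The only differences are cosmetic: you spell out the translation-invariance argument behind the paper's ``without loss of generality,'' and you bound the expressions below by $4(1-s_n)>0$ rather than by $-2U(s_n)\overline{M}_n\geq 0$ as the paper does.
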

\begin{proof}
Without loss of generality we can assume $U(1)=0.$ We can apply Lemma \ref{stran} as  $$4(1-s_n)+U(s_n)^2-2U(s_n)\overline{M}_n>-2U(s_n)\overline{M}_n\geq0 \quad \text{and}$$ 
$$ 4(1-t_n)+U(t_n)^2-2U(t_n)\underline{M}_n>-2U(t_n)\underline{M}_n\geq0.$$
\end{proof}

\section{Proof of Theorem \ref{proop}}

Now we are ready to prove Theorem \ref{proop}.

\begin{proof}[Proof of Theorem \ref{proop}]
Let $C>0.$ First we construct the driving function $U$, which is shown in Figure 3 for the case $C=5$. 

We set $U(r_n):=0$ with $r_n:=1-\frac1{2^n}$ for all $n\geq 0.$ The mean value of $r_n$ and $r_{n+1}$ is equal to $w_n:=1-\frac3{2^{n+2}}$ and here we define $$U(w_n) := C\sqrt{\frac3{2^{n+2}}}\quad \text{for} \quad n\geq0.$$ Now we define $U(t)$ for $t\in[0,1)$ by linear interpolation, so that 
\begin{eqnarray*}
 U(t)&=& C\sqrt{3\cdot2^{n+2}}\cdot (t-r_n) \quad \text{for} \quad t\in[r_n, w_n]\quad \text{and}\\
U(t)&=& C\sqrt{3\cdot2^{n+2}}\cdot (r_{n+1}-t) \quad \text{for} \quad t\in[w_n,r_{n+1}].
\end{eqnarray*}
  By defining $U(1):=0$ we now have a continuous driving function and 
$$ \limsup_{h\downarrow0}\frac{|U(1)-U(1-h)|}{\sqrt{h}}=
\lim_{n\to\infty}\frac{|U(1)-U(w_n)|}{\sqrt{1-w_n}} = 
\lim_{n\to\infty}\frac{C\sqrt{3/2^{n+2}}}{\sqrt{3/2^{n+2}}} = C.$$
At each $0\leq t <1,$ the hull $K_t$ produced by this function will be a quasislit according to Theorem \ref{lmr}. Thus, we have to show that also $K_1$ is a slit and that this slit is a quasiarc.\\

First, we know that $\{K_t\}_{t\in[0,1]}$ is welded: This follows directly from  Corollary \ref{cori} by setting $s_n:=t_n:=r_n$.

If we scale our hull by $\frac1{\sqrt{2}}$, we end up with the new driving function $\tilde{U}:[0,1/2]\to\R,$ $\tilde{U}(t)=\frac1{\sqrt{2}}U(2t)$. However, this is again $U(t)$, confined to the interval $[1/2,1],$ i.e. $\tilde{U}(t)=U(t+1/2).$ Geometrically, this means that $g_{1/2}(\overline{K_1\setminus K_{1/2}})$ is just the same as $\frac1{\sqrt{2}}K_1,$ the original hull scaled by $\frac1{\sqrt{2}}.$\\

If $f:=g_{1/2}^{-1}$, and $S_n:=\overline{K_{1-1/2^n}\setminus K_{1-1/2^{n-1}}}, n\geq1,$ then  we have  $$S_{n+1}=f\left(\frac1{\sqrt{2}}S_n\right).$$
As the function $z\mapsto f(\frac{1}{\sqrt{2}}z)=:I(z)$ is not an automorphism of $\Ha$, the Denjoy--Wolff Theorem implies that the iterates $I^n=(I\circ\ldots\circ I)$ converge uniformly on $S_1$ to a point $S_\infty \in \overline{\Ha}\cup\{\infty\}.$ $S_\infty=\infty$ is not possible as the hull $K_1$ is a compact set and the case $S_\infty\in \R$ would imply that $K_1$ is not welded. Consequently $S_\infty\in \Ha$ and $K_1=\displaystyle \bigcup_{n\geq 1} S_n \cup \{S_\infty\}$ is a simple curve whose tip is $S_\infty$.\\

Now we show that this curve is a quasiarc.

For this, we will use the metric characterization of quasiarcs by Ahlfors' three point property (also called \emph{bounded turning property}, see \cite{MR0344463}, Section 8.9, or \cite{MR0210889}, Theorem 1), which says that $K_1$ is a quasiarc if and only if 
$$ \sup_{
\substack{x,y \in K_1 \\ x\not=y}} \frac{\diam(x,y)}{|x-y|} < \infty,$$ 
where we denote by $\diam(x,y)$ the diameter of the subcurve of $K_1$ joining $x$ and $y.$

For $m\in\N\cup\{0\}$ we define $F_m:=\bigcup_{k\geq m}S_{k}\cup\{S_\infty\}$. As $K_t$ is a quasislit for every $t\in(0,1),$ it suffices to show that 
\begin{equation}\label{show}
 \sup_{
\substack{x,y \in F_m \\ x\not=y}} \frac{\diam(x,y)}{|x-y|} < \infty \quad \text{for one} \; m \in\N.
\end{equation}

The set $S_n$ contracts to $S_\infty$ when $n\to\infty,$ in particular $\diam(S_n)\to0.$ 

As $I$ is conformal in $B(S_\infty, \eps)$ for $\eps>0$ small enough, there is an $N=N(\eps)\in\N$, such that 
$S_n\subset B(S_\infty, \eps)$ for all $n\geq N$. 

$S_\infty$ is a fixpoint of $I(z)$ and so $|I'(S_\infty)|<1.$ Otherwise, $I$ would be an automorphism of $\Ha.$\\

Now, for $x\in S_{N+n}$, $n\geq 0,$ we have $I(x)\in S_{N+n+1}$ and
 \begin{eqnarray*}
 &&|I(x)-S_\infty|=|I(x)-I(S_\infty)|=|I'(S_\infty)(x-S_\infty)+\LandauO(|x-S_\infty|^2)|\\&=&
|I'(S_\infty)+\LandauO(|x-S_\infty|)|\cdot|x-S_\infty|=|I'(S_\infty)||1+\LandauO(|\eps|)|\cdot|x-S_\infty|.
\end{eqnarray*} Consequently we can pass on to a smaller $\eps$ (and larger $N$) such that $\dist(S_\infty, S_{N+n+1})\leq c\dist(S_\infty, S_{N+n})$ with $c<1$ and for all $n\geq0$. Hence 
$S_{N+n}\subset B(S_\infty,c^n\eps).$\\

Furthermore, for $x,y\in F_{N+n}$ we have
\begin{eqnarray*}
 |I(x)-I(y)|= |I'(x)+\LandauO(c^n\eps)|\cdot|x-y|=|I'(S_\infty)+\LandauO(c^n\eps)|\cdot|x-y|.
\end{eqnarray*}

Hence there exist positive constants $a_1,a_2$ with $1-a_2c^n>0$ such that
\begin{equation}\label{diam}(1-a_2c^n)|I'(S_\infty)||x-y|\leq|I(x)-I(y)|\leq(1+a_1c^n)|I'(S_\infty)||x-y|.\end{equation} Thus \begin{equation}\label{diam2}\diam(I(x),I(y))\leq (1+a_1c^n)|I'(S_\infty)|\diam(x,y).\end{equation}
 
Now we will show (\ref{show}) for $m=N.$ Let $x, y \in F_N$ with $x\not=y.$ We assume that $\diam(x,S_\infty)\geq \diam(y,S_\infty).$ In particular, $x\not=S_\infty$ and thus there is a $k\geq 0$ and an $\hat{x}\in S_N$ such that $x=I^k(\hat{x}).$ Let $\hat{y}\in F_N$ be defined by $y=I^k(\hat{y}).$ 
First note that
$$ \sup_{
\substack{a\in S_N, b\in F_N \\ a\not=b}} \frac{\diam(a,b)}{|a-b|}=:C<\infty,
$$
for $K_t$ is a quasislit for every $t\in(0,1).$ Now we get with (\ref{diam}) and (\ref{diam2}):

\begin{eqnarray*}
 \frac{\diam(x,y)}{|x-y|} &=&  \frac{\diam(I^k(\hat{x}),I^k(\hat{y}))}{|I^k(\hat{x})-I^k(\hat{y})|}\leq \frac{(1+a_1c^{k-1})|I'(S_\infty)|\diam(I^{k-1}(\hat{x}),I^{k-1}(\hat{y}))}{(1-a_2c^{k-1})|I'(S_\infty)||I^{k-1}(\hat{x})-I^{k-1}(\hat{y})|} =\\
&=& \frac{(1+a_1c^{k-1})}{(1-a_2c^{k-1})}\cdot \frac{\diam(I^{k-1}(\hat{x}),I^{k-1}(\hat{y}))}{|I^{k-1}(\hat{x})-I^{k-1}(\hat{y})|} \leq ... \leq \\
&\leq& \prod \limits_{j=0}^{k-1} \frac{(1+a_1c^j)}{(1-a_2c^j)}\cdot \frac{\diam(\hat{x},\hat{y})}{|\hat{x}-\hat{y}|}  \leq
 \prod \limits_{j=0}^{k-1} \frac{(1+a_1c^j)}{(1-a_2c^j)} \cdot C \leq   C \prod  \limits_{j=0}^{\infty}\frac{1+a_1c^j}{1-a_2c^j} =\\ &=&
 C  \prod \limits_{j=0}^{\infty}(1+a_1c^j) / \prod \limits_{j=0}^{\infty}(1-a_2c^j)   <\infty.
\end{eqnarray*}

The two Pochhammer products converge because $|c|<1.$ Consequently, $K_1$ is a quasislit.
\end{proof}

  \begin{figure}[h]
    \centering
\includegraphics[width=130mm]{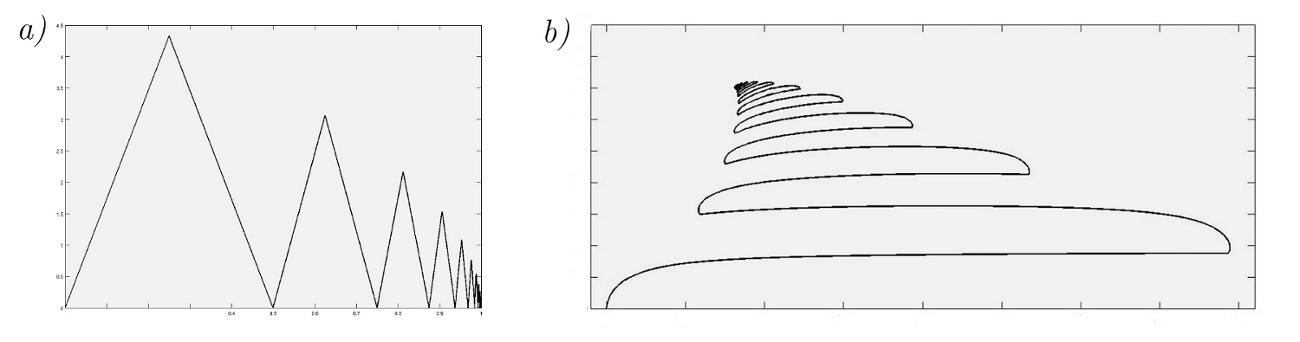}
 \caption{The driving function $U$ from Proposition \ref{proop} with $C=5$ (left) and the generated quasislit (right).}
 \end{figure}

\begin{remark}
 The argument that $U$ from the proof of Proposition \ref{proop} generates a slit holds for a more general case:

 Let $U:[0,1]\to\R$ be continuous with $U(1)=0.$ Call such a function $d$--similar with $0<d<1$ if $V(t):=U(1-t)$ satisfies $$d\cdot V(t/d^2)=V(t) \quad \text{for all}\quad 0<t\leq d^2.$$ Every $d$--similar function can  be constructed by defining $V(1)$ arbitrarily, putting $V(d^2)=d\cdot V(1)$ and then defining $V(t)$ for $d^2<t<1$ such that $V$ is continuous in $[d^2,1].$ Then, $V$ is uniquely determined for all $0\leq t \leq 1.$ Now we have: \emph{If $U$ is $d$--similar such that it produces a slit for all $0\leq t<1$ and the hull at $t=1$ is welded, then $K_1$ is a slit, too. }
% This statement can be weakend to:\\
% \textit{If $U$ (with $U(1)=0$) produces a simple curve for all $0\leq t<1$, the hull at $t=1$ is welded and there exists a $d$--similar driving function $\tilde{U}$ with $d\cdot U(1-t/d^2)\to \tilde{U}(t)$ for $d\to \infty$ uniformly on $[0,1]$, then $K_1$ is a simple curve.}
\end{remark}

\begin{example}
 There exists a driving function $V:[0,1]\to\R$ that is ''irregular`` at infinitely many points and generates a quasislit:\\

 Let $U$ be the driving function from the proof of Proposition \ref{proop}. We construct  $V:[0,1]\to\R$ by sticking pieces of $U$ appropriately together. For $n\geq0$ let
$$ V(t):=  U((t-(1-1/2^n))\cdot2^{n+1}) / \sqrt{2^n}
  \quad\text{for}\quad t\in[1-1/2^n, 1-1/2^{n+1}], $$ 
and $V(1):=0.$ Then $V$ is ''irregular`` at $1-1/2^n$ for all $n\geq1$ and it produces a quasislit: The hull generated at $t=1/2$ is a quasislit by Theorem \ref{proop}. Now one can repeat the proof of Theorem \ref{proop} to show that the whole hull is a quasislit, too.
\end{example}

These examples together with Theorem \ref{lmr} suggest the following question:
 Does $U$ generate a quasislit if $U$ generates a slit and $U\in \Lip$?

The answer is no: There are $\Lip$-driving functions that generate slits with positive area. These slits cannot be quasislits as they are not uniquely determined by their welding homeomorphisms, see Corollary 1.4 in \cite{spacefill}.

\bibliographystyle{amsalpha}

\end{document}